\newtheorem*{thmA}{Theorem A}
\newtheorem*{thmB}{Proposition B}
\newtheorem{thm}{Theorem}
\newtheorem{lem}{Lemma}
\newtheorem*{thmGW}{Theorem}
\theoremstyle{definition}
\newtheorem{defn}[thm]{Definition}
\newtheorem{oq}{Question}
\numberwithin{equation}{section}
\def\C{\boldsymbol{C}}
\def\X{\boldsymbol{X}}
\def\x{\boldsymbol{x}}
\def\U{\boldsymbol{U}}
\journal{JDE (Received May 31 2013; revised 23 November 2014)}
\begin{document}

\begin{frontmatter}

\title{Chaotic dynamics of continuous-time topological semi-flows on Polish spaces}

%% use optional labels to link authors explicitly to addresses:
 \author{Xiongping Dai}
 \address{Department of Mathematics, Nanjing University, Nanjing 210093, People's Republic of China}
 \ead{xpdai@nju.edu.cn}
%% \ead[url]{home page}
%% \fntext[label2]{}
%% \cortext[cor1]{}
%% \address{Address\fnref{label3}}
%% \fntext[label3]{}
%%%%%%%%%%%%%%%%%%%%%%%%%%%%%%%%%%%%%%
\begin{abstract}
Differently from Lyapunov exponents, Li-Yorke, Devaney and others that appeared in the literature, we introduce the concept, \textit{chaos}, for a continuous semi-flow $f\colon\mathbb{R}_+\times X\rightarrow X$ on a Polish space $X$ with a metric $d$, which is useful in the theory of ODE and is invariant under topological equivalence of semi-flows. Our definition is weaker than Devaney's one since here $f$ may have neither fixed nor periodic elements; but it implies \textit{repeatedly observable sensitive dependence on initial data}: there is an $\epsilon>0$ such that for any $x\in X$, there corresponds a \textit{dense $G_\delta$-set} $\mathcal{S}_\epsilon^u(x)$ in $X$ satisfying
\begin{equation*}
\limsup_{t\to+\infty}d(f^t(x),f^t(y))\ge\epsilon\quad \forall y\in\mathcal{S}_\epsilon^u(x).
\end{equation*}
This sensitivity is obviously stronger than Guckenheimer's one that requires only $d(f^t(x),f^t(y))\ge\epsilon$ for some moment $t>0$ and some $y$ arbitrarily close to $x$.
\end{abstract}
%%%%%%%%%%%%%%%%%%%%%%%%%%%
\begin{keyword}
Semi-flow\sep sensitivity\sep chaos\sep quasi-transitivity\sep maximal chaotic subsystem.

\medskip
%% MSC codes here, in the form: \MSC code \sep code
%% or
\MSC[2010] 34C28\sep 37D45\sep 37B05\sep 37B20\sep 54H20
\end{keyword}
\end{frontmatter}

% THEOREM Environments (Examples)-----------------------------------------
%%%%%%%%%%%%%%%%%%%%%%%%%%
%\end{document}

%%%%%%%%%%%%%%%%%%%%%%%%%%%%%%%%%
%% Place the the running title of the paper with 40 letters or less in []
 %% and the title of the paper in { }.
%\title[Maximal Devaneywise chaotic invariant sets]
% {On maximal Devaneywise chaotic invariant sets of a topological dynamical system}%

%% Place all authors' names in [ ] shown as running head;
 %% No more than 40 letters. Leave { } empty
%\author[Xiongping Dai]{}

%% It is required to enter MSC and Keywords.
%\subjclass[2000]{37B05, 54H15.}
% \keywords{Topological dynamical system, Devaney chaos, Devaneywise chaotic set.}%

%% Email address is required.
%\email{xpdai@nju.edu.cn}%

%\date{May 1, 2013}

%\thanks{Project was supported by National Natural Science Foundation of China (Grant No. 11071112) and PAPD of Jiangsu Higher Education %Institutions.}

%\begin{document}
%\maketitle

%% Enter the first author's name and address:
%\centerline{\scshape Xiongping Dai}
%\medskip

%{\footnotesize
% \centerline{Department of Mathematics, Nanjing University, Nanjing, 210093, People's Republic of China}
%} %% Do not forget to end the {\footnotesize by the sign }

%\bigskip

% \centerline{(Communicated by XX)}
 %\medskip

%%%%%%%%%%%%%%%%%%%%%%%%%%%%%%%%%%%%%%%%

%%% ----------------------------------------------------------------------
%%% ----------------------------------------------------------------------
%\tableofcontents
%%%%%%%%%%%%%%%%%%%%%%%%%%%%%%%%%%%%%%%%%%

%%%%%%%%%%%%%%%%%%%%%%%%%%%%%%%%%%%%%%%%%%
%%%%%%%%%%%%%%%%%%%%%%%%%%%%%%%%%%%%%%%%%%
\section{Introduction}\label{sec1}%%%

Lyapunov stability and chaotic behavior of motions $f(t,x)$ of a dynamical system on a metric space $(X,d)$ with continuous-time $0\le t<\infty$ have been the most fascinating aspects in nonlinear science.

The Lyapunov stability of a motion $f(t,x)$ is defined from A.A.~Markov as follows: given any $\epsilon>0$ there exists a number $\delta>0$ so that for any $y$ within a distance $\delta$ of $x$, $d(f(t,x),f(t,y))<\epsilon$ for all $t>0$; see \cite[Definition~V.8.03]{NS}. Although there have been various definitions of chaos for discrete-time dynamical systems since Li and Yorke 1975, yet as far as we know there is no rigorous mathematical definition of chaos for continuous-time integrated flow or semi-flow of a differential system on Riemannian manifolds like $\mathbb{R}^n$. In fact, there are essential differences between the chaotic dynamics of discrete-time and continuous-time dynamical systems. For example, if $X$ has no isolated points and if a \textit{topologically transitive} continuous map $T$ of $X$ has a \textit{fixed} or \textit{periodic} points, then $T$ is Li-Yorke chaotic; that is, one can find an \textit{uncountable scrambled set} $S\subset X$ for $T$ such that for all $x,y\in S$ with $x\not=y$,
\begin{equation*}
\liminf_{n\to+\infty}d(T^n(x),T^n(y))=0\quad\textrm{and}\quad
\limsup_{n\to+\infty}d(T^n(x),T^n(y))>0
\end{equation*}
by the work of W.~Huang and X.~Ye 2002~\cite{HY}. However, this is not the case for continuous-time flows as shown by a counterexample as follows: if $f\colon\mathbb{R}\times\mathds{T}^1\rightarrow\mathds{T}^1$ is the continuous-time rotation of the unit circle $\mathds{T}^1$, then it is topologically transitive and the underlying space $\mathds{T}^1$ exactly consists of a periodic orbit, but does not have any Yi-Yorke's scrambled pair $(x,y)$.

In this paper, we will introduce a kind of chaos which may characterize the opposite side of stability---instability: if the motion $f(t,x)$ is chaotic in a region $\varSigma$, then there exists an $\epsilon>0$ such that for any $\delta>0$ one can find some $y\in\varSigma$ within a distance $\delta$ of $x$ such that $\limsup_{t\to+\infty}d(f(t,x),f(t,y))\ge\epsilon$.
That is to say, $f(t,x)$ has the \textit{sensitive dependence on initial condition}. Importantly, this sensitivity is repeatedly observable; since we can choose a Borel subset $\mathscr{O}$ of $[0,\infty)$ with Lebesque measure $\infty$ such that $d(f(t,x),f(t,y))\ge\epsilon$ for all $t\in\mathscr{O}$.

More precisely, let $f\colon\mathbb{R}\times\mathbb{R}^n\rightarrow\mathbb{R}^n$ be the integrated flow of a complete $\mathrm{C}^1$-vector field $\X$ on the $n$-dimensional euclidean space $\mathbb{R}^n$; i.e.,
\begin{equation*}
\frac{d}{dt}f(t,x)=\X(f(t,x)),\quad \forall t\in\mathbb{R}\textrm{ and }x\in\mathbb{R}^n.
\end{equation*}
To study the unpredictability of the dynamical behavior of a given Lagrangian stable motion $f(t,\x)$ as $t\to+\infty$, the classical way in statistical mechanics is to see if the Lyapunov exponent, defined as
\begin{equation*}
\chi(f,\x)=\limsup_{t\to+\infty}\frac{1}{t}\log\bigg{\|}\frac{\partial f(t,\x)}{\partial x}\bigg{\|},
\end{equation*}
is positive or not. Differently from this analytic method, we present a topological formulation of the unpredictability of the motion $f(t,\x)$.

Let $\varSigma_{\x}$ be the closure of the forward orbit $f([0,\infty),\x)$ of the motion $f(t,\x)$, for any $\x\in\mathbb{R}^n$. We say $f(t,\x)$ is \textit{chaotic} in $\varSigma_{\x}$ if the following two topological conditions are satisfied:
\begin{itemize}
\item (Minimal-sets density) the (Birkhoff) recurrent motions are dense in $\varSigma_{\x}$;
\item (Non-minimality) $\varSigma_{\x}$ is not minimal.
\end{itemize}
Then we can obtain the following sensitivity theorem:
\begin{thmA}
If the Lagrangian stable motion $f(t,\x)$ is chaotic in $\varSigma_{\x}$, then it is observably sensitive in the sense that there exists an $\epsilon>0$ having the property: for any $\delta>0$, one can find some $y\in\varSigma_{\x}$ with $\|\x-y\|<\delta$ and a Borel subset $\mathscr{O}=\mathscr{O}(y)$ of $[0,\infty)$ with Lebesque measure $\infty$, such that $\|f(t,\x)-f(t,y)\|>\epsilon$ for every $t\in\mathscr{O}$.
\end{thmA}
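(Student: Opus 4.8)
\noindent\emph{Proof strategy.} First I would pass to the induced dynamics on the compact invariant set $\varSigma:=\varSigma_{\x}$, which is compact precisely because $f(t,\x)$ is Lagrange stable. Since $\varSigma$ is by definition the closure of $f([0,\infty),\x)$, the point $\x$ is a \emph{transitive point}, its forward orbit being dense in $\varSigma$. The hypothesis that Birkhoff recurrent motions are dense, combined with non-minimality, forces at least two distinct minimal sets: if $M$ were the only minimal set, then every recurrent point would lie in $M$, density would give $\overline{M}=\varSigma$, and the closedness and minimality of $M$ would make $\varSigma$ minimal, a contradiction. Two distinct minimal sets $M_1,M_2$ are disjoint compacta, so $d(M_1,M_2)=:8\epsilon>0$; this $\epsilon$ will be the sensitivity constant. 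As $d(\x,M_1)+d(\x,M_2)\ge d(M_1,M_2)$, after relabelling I may assume $d(\x,M_1)\ge 4\epsilon$, and it suffices to treat $\delta\le\epsilon$, since a partner found in a smaller ball also serves a larger one.

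Second, I would establish the auxiliary fact $\omega(\x)=\varSigma$, i.e.\ the forward orbit of $\x$ returns near \emph{every} point of $\varSigma$ at arbitrarily large times. Given $w\in\varSigma$, $\rho>0$ and $T_0>0$, pick a Birkhoff recurrent $r\in B(w,\rho/3)$; recurrence supplies arbitrarily large $\sigma>T_0$ with $f(\sigma,r)\in B(r,\rho/3)$, continuity of $f(\sigma,\cdot)$ supplies $\rho'>0$ with $f(\sigma,B(r,\rho'))\subseteq B(w,\rho)$, and density of the orbit of $\x$ supplies $t_1\ge 0$ with $f(t_1,\x)\in B(r,\rho')$; then $f(t_1+\sigma,\x)\in B(w,\rho)$ with $t_1+\sigma>T_0$, whence $w\in\omega(\x)$. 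From this together with uniform continuity of $f$ on $[0,T]\times\varSigma$ I obtain the \emph{sojourn estimate}: for every $T>0$ there are arbitrarily large $t$ with $d(f(s,\x),M_1)<\epsilon/2$ for all $s\in[t,t+T]$. Indeed, fix $w\in M_1$, choose $\rho$ so that $\|p-w\|<\rho$ forces $\|f(s,p)-f(s,w)\|<\epsilon/2$ on $[0,T]$, and recall $f(s,w)\in M_1$; since $w\in\omega(\x)$ the orbit of $\x$ enters $B(w,\rho)$ at arbitrarily large times $t$, and each such $[t,t+T]$ is a sojourn interval.

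Third, for a prescribed $\delta\le\epsilon$ I would take $y:=z$ to be a Birkhoff recurrent point with $\|z-\x\|<\delta$, so that $d(z,M_1)\ge 4\epsilon-\epsilon=3\epsilon$ and $z$ sits in a compact minimal orbit closure; hence its returns are \emph{syndetic}, the set $\{t:f(t,z)\in B(z,\epsilon/4)\}$ meeting every interval of some fixed length $L$. The decisive upgrade from a separating \emph{instant} to a separating \emph{interval} rests on uniform continuity of $f$ at $s=0$: there is one $\theta_0>0$ with $\|f(s,p)-p\|<\epsilon/4$ for every $p\in\varSigma$ and every $s\in[0,\theta_0]$. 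Consequently, whenever $f(t^{\ast},z)\in B(z,\epsilon/4)$ one has $f(s,z)\in B(z,\epsilon/2)$, hence $d(f(s,z),M_1)\ge 3\epsilon-\epsilon/2=\tfrac{5}{2}\epsilon$, throughout $[t^{\ast},t^{\ast}+\theta_0]$. Applying the sojourn estimate with $T:=L+\theta_0+1$, inside each arbitrarily large interval on which $d(f(\cdot,\x),M_1)<\epsilon/2$ syndeticity yields a return time $t^{\ast}$ with $[t^{\ast},t^{\ast}+\theta_0]$ contained in it, and there $\|f(s,\x)-f(s,z)\|\ge \tfrac{5}{2}\epsilon-\tfrac{1}{2}\epsilon=2\epsilon>\epsilon$. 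Selecting one such sub-interval of length $\theta_0$ from each of the infinitely many disjoint sojourn intervals produces a Borel set $\mathscr O=\mathscr O(z)$ of infinite Lebesgue measure on which $\|f(t,\x)-f(t,z)\|>\epsilon$, as required.

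The genuinely delicate point, and the main obstacle, is the passage to \emph{infinite measure}: a naive argument delivers only an unbounded set of separating instants, whereas the statement demands a set of infinite Lebesgue measure. Surmounting this is exactly the synchronisation in the third step—one must lock the syndetic returns of the recurrent partner $z$ (which keep $f(t,z)$ uniformly far from $M_1$) against the long sojourns of the transitive orbit $f(t,\x)$ near $M_1$, and then convert each coincidence into a time-interval of definite length $\theta_0$ by exploiting that the flow displaces every point of $\varSigma$ by less than $\epsilon/4$ within time $\theta_0$, a uniform modulus available only because $\varSigma$ is compact. A secondary subtlety is that the entire scheme is built on $\omega(\x)=\varSigma$; this is precisely where density of recurrent motions, rather than mere transitivity, is indispensable, since a transitive orbit need not be positively recurrent.
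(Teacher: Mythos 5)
Your argument is correct, and it reaches the conclusion by a genuinely different route from the paper. The paper deduces Theorem~A in two stages: it first proves the weaker Proposition~B (only $\limsup_{t\to+\infty}\|f(t,\x)-f(t,y)\|>\epsilon$) as a special case of the general Polish-space sensitivity result (Theorem~\ref{thm4}), whose proof is a Banks-et-al-style synchronization --- a recurrent point $p$ near $\hat{x}$, a far-away orbit of $q$, a quasi-transitive point $z$ steered into a tube around $q$'s orbit, and a final non-constructive dichotomy (``either $z$ or $p$ separates from $\hat{x}$''); it then upgrades separating instants to a set of infinite measure by invoking the boundedness of the $\mathrm{C}^1$-vector field on $\varSigma_{\x}$, i.e.\ Lipschitz dependence on $t$. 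You instead work entirely inside the compact set $\varSigma_{\x}$: you extract two disjoint minimal sets from conditions (2) and (3), prove $\omega(\x)=\varSigma_{\x}$, and separate the long sojourns of $f(t,\x)$ near $M_1$ from the syndetic returns of an explicitly chosen recurrent partner $z$ near $\x$ but far from $M_1$; the passage to infinite measure comes from the uniform small-time displacement bound $\theta_0$, which uses only compactness and continuity. What your approach buys: the witness $y$ is explicit (a Birkhoff recurrent point, with no either/or), and the theorem extends verbatim to continuous semi-flows not generated by a $\mathrm{C}^1$-vector field. What it costs: it leans essentially on compactness of $\varSigma_{\x}$ (for $\theta_0$, the sojourn estimate, and $\omega(\x)=\varSigma_{\x}$), whereas the paper's Theorem~\ref{thm4} is proved on an arbitrary Polish space --- though for Theorem~A itself Lagrange stability supplies the compactness, so nothing is lost here. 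All the individual steps (disjointness and positive separation of distinct compact minimal sets, syndeticity of return times from Birkhoff recurrence, the $\omega$-limit computation via density of recurrent points) check out.
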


We note here that since the $\mathrm{C}^1$-vector field $\X$ is bounded restricted to $\varSigma_{\x}$, to prove Theorem~A it is sufficient to prove the following weak result:
\begin{thmB}
If the Lagrangian stable motion $f(t,\x)$ is chaotic in $\varSigma_{\x}$, then there exists an $\epsilon>0$ such that for any $\delta>0$, one can find some $y\in\varSigma_{\x}$ with $\|\x-y\|<\delta$ so that $\limsup_{t\to+\infty}\|f(t,\x)-f(t,y)\|>\epsilon$.
\end{thmB}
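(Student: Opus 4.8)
The plan is to use the two defining properties of chaos separately and in complementary ways: \emph{non-minimality} will furnish a proper minimal set sitting a fixed positive distance from $\x$, while \emph{density of the recurrent motions} will furnish, arbitrarily close to $\x$, a genuinely (Birkhoff, i.e.\ uniformly) recurrent perturbation whose orbit keeps returning near $\x$. First I would record the standing facts. Since the motion is Lagrange stable, $\varSigma_{\x}$ is compact and invariant and $\x$ has dense forward orbit in it. As $\varSigma_{\x}$ is not minimal it contains a proper minimal set $M$; because $\overline{f([0,\infty),\x)}=\varSigma_{\x}\supsetneq M$ with $M$ closed and invariant, one cannot have $\x\in M$, so $D:=\inf_{m\in M}\norm{\x-m}>0$, and I would take $\epsilon:=D/2$ as the sensitivity constant. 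Writing $\varSigma_{\x}=f([0,\infty),\x)\cup\omega(\x)$ (by joint continuity the only new limit points arise as $t\to+\infty$), a short argument gives $M\cap\omega(\x)\neq\emptyset$: otherwise $M\subseteq f([0,\infty),\x)$, and any $m=f(s,\x)\in M$ would yield $\omega(\x)=\omega(m)=M$ by minimality, contradicting $M\cap\omega(\x)=\emptyset$. Fixing $p\in M\cap\omega(\x)$, the forward orbit of $\x$ then revisits every neighborhood of $p$ at a sequence of times tending to $+\infty$.

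For the construction, fix $\delta>0$ and use density of the recurrent motions to pick a uniformly recurrent $y\in\varSigma_{\x}$ with $\norm{\x-y}<\min\{\delta,D/8\}$. Uniform recurrence means the return set $S=\{t\ge0:\norm{f(t,y)-y}<D/8\}$ is syndetic, say with gap bound $G$. Applying joint continuity to the compact segment $\{f(s,p):0\le s\le 2G\}$, I obtain $\eta>0$ so that $\norm{z-p}<\eta$ forces $f(s,z)$ to stay within $D/8$ of $M$ for all $0\le s\le 2G$. Since $\x$ visits the ball $B(p,\eta)$ at times $t_j\to+\infty$, on each window $[t_j,t_j+2G]$ the point $f(t,\x)$ lies within $D/8$ of $M$; as this window has length $2G$ it must contain a return time $r_j\in S$, and $r_j\to+\infty$. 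At $t=r_j$ one has $\norm{f(r_j,y)-\x}\le\norm{f(r_j,y)-y}+\norm{y-\x}<D/8+D/8=D/4$, while $f(r_j,\x)$ is within $D/8$ of $M$, so choosing $m^{*}\in M$ with $\norm{f(r_j,\x)-m^{*}}<D/8$ the triangle inequality gives $\norm{f(r_j,\x)-f(r_j,y)}\ge\norm{\x-m^{*}}-\norm{f(r_j,y)-\x}-\norm{f(r_j,\x)-m^{*}}>D-D/4-D/8=5D/8>\epsilon$. Hence $\limsup_{t\to+\infty}\norm{f(t,\x)-f(t,y)}\ge 5D/8>\epsilon$, which is exactly the assertion.

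The step I expect to be the main obstacle is the \emph{synchronization}: producing arbitrarily large times at which, simultaneously, $f(t,y)$ has returned close to $\x$ and $f(t,\x)$ is pinned near the far set $M$. The visits of $\x$ to $M$ and the returns of $y$ need not occur at common instants, and this is precisely what the lingering estimate above overcomes, by shrinking the target ball $B(p,\eta)$ so that the orbit of $\x$ shadows $M$ for a time-span exceeding the syndetic gap $G$ of the returns of $y$. A second, more structural delicate point is the lemma $M\cap\omega(\x)\neq\emptyset$, which guarantees that $\x$ keeps revisiting a neighborhood of the far minimal set; everything else is triangle-inequality bookkeeping. Note that the two chaos hypotheses enter in complementary roles---non-minimality fixes the far minimal set $M$ and the scale $D$, while density of the recurrent motions supplies the nearby uniformly recurrent $y$---and that Lagrange stability is used only to guarantee compactness of $\varSigma_{\x}$, hence the existence of $M$ and of the limit point $p$.
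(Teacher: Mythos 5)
Your proof is correct. It rests on the same core mechanism as the paper's (the paper proves Proposition~B as a special case of Theorem~\ref{thm4}): Birkhoff recurrence gives a syndetic set of return times with gap $G$, property (II)$'$ lets an entire neighborhood track an orbit of the ``far'' set for a time span exceeding $G$, and the two are synchronized to produce separation at a common large time. But the roles are assigned quite differently. The paper works near an arbitrary $\hat{x}$ with \emph{three} points: a recurrent $p$ that keeps returning near $\hat{x}$, a far point $q$, and a quasi-transitive point $z$ near $\hat{x}$ that is sent into a carefully built neighborhood $V=\bigcap_{t\in[0,2T)}f^{-t}(B_\delta(f^t(q)))$; it concludes that $p$ and $z$ separate by $2\delta$ and hence that \emph{one of them} (it cannot say which) separates from $\hat{x}$, which costs an either/or dichotomy and a factor in the sensitivity constant ($\epsilon=\delta_0/12$). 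You instead exploit the compactness available in Proposition~B: the far set is a proper minimal set $M$ at distance $D$ from $\x$, your lemma $M\cap\omega(\x)\neq\varnothing$ guarantees that $\x$ \emph{itself} plays the role of the transitive point revisiting $M$, and the recurrent perturbation $y$ plays the role of $p$. This yields a direct two-point estimate $\|f(r_j,\x)-f(r_j,y)\|>5D/8$ with no dichotomy and a better constant. What you give up is generality: the existence of $M$, of $p\in\omega(\x)$, and of the positive distance $D$ all use compactness of $\varSigma_{\x}$, so your argument does not extend to the paper's Theorem~\ref{thm4} on a general Polish space, whereas the paper's proof does (and Proposition~B then follows as a corollary). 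For the statement actually asked, your route is a clean and valid alternative; the only points worth making fully explicit are that Birkhoff recurrence (with $\upsilon=0$ in the paper's definition) is what gives syndeticity of $S$, and that one should also take $\eta\le D/8$ to cover the degenerate case $r_j=t_j$.
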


Consequently if the motion $f(t,\x)$ is chaotic, then it is not Lyapunov asymptotically stable. In fact, we will prove this proposition in a more general framework; see Theorem~\ref{thm4} in Section~\ref{sec2}.

As shown by the motion of the irrational rotation on the $2$-dimensional torus $\mathds{T}^2$, the weak non-minimality condition is not ignorable for our chaos. In addition, the minimal-sets density condition is not ignorable too.

\subsection{Topological semi-flows}\label{sec1.1}
We now formalize our study framework in the sequel of this paper. Let $X$ be a Polish space (i.e., a complete separable metric space) with a metric $d$, and let $\mathbb{R}_+=[0,\infty)$ be the nonnegative time axis of the dynamical systems we consider here. From now on we let
\begin{equation*}
f\colon \mathbb{R}_+\times X\rightarrow X;\quad (t,x)\mapsto f(t,x)=f^t(x)
\end{equation*}
be a continuous semi-flow on $X$; that is, $f(t,x)$ satisfies the following three conditions.
\begin{enumerate}
\item[(I)] The initial condition: $f(0,x)=x$ for all $x\in X$.
\item[(II)] The condition of continuity with respect to the variables $t$ and $x$: if there be given two convergent sequences $t_n\to t_0$ in $\mathbb{R}_+$ and $x_n\to x_0$ in $X$, then $f(t_n,x_n)\to f(t_0,x_0)$ as $n\to\infty$.
\item[(III)] The semigroup condition: $f(t_2,f(t_1,x))=f(t_1+t_2,x)$ for any $x\in X$ and any times $t_1,t_2\in\mathbb{R}_+$.
\end{enumerate}
Then the pair $(X,f)$ is named as a \textit{continuous-time topological dynamical system}. This framework is applicable for the theory of ordinary differential equations.

From condition (II) there is obtained as a corollary the following property:
\begin{enumerate}
\item[(II)$^\prime$] For any $q\in X$, any number $T>0$ (arbitrarily large) and any $\varepsilon>0$ (arbitrarily small), there can be found a number $\delta=\delta(q,T,\varepsilon)>0$ such that if $d(q,x)<\delta$ and $0<t\le T$, then $d(f(t,q),f(t,x))<\varepsilon$.
\end{enumerate}

\begin{proof}
If this property were false, then there could be found a sequence of points $q_n\to q$ in $X$ and a corresponding sequence of numbers $t_n\to t_0$ in $[0,T]$, such that
$d(f(t_n,q),f(t_n,q_n))\ge\alpha$ for some constants $\alpha>0$. Then by the triangle inequality,
$$d(f(t_n,q),f(t_n,q_n))\le d(f(t_n,q),f(t_0,q))+d(f(t_0,q),f(t_n,q_n)).$$
Hence by condition (II), it follows that $d(f(t_0,q),f(t_0,q))\ge\alpha$, a contradiction. This proves the assertion.
\end{proof}

As usual, a point $p\in X$ is called a \textit{periodic point of period $\pi$} of $f$ if $f^\pi(p)=p$ and $f^t(p)\not=p$ for all $0<t<\pi$, for some number $\pi>0$; by $\textrm{Per}(f)$ we mean the set of all periodic points of $f$. If $f^t(p)=p$ for all $t>0$ then $p$ is called a \textit{fixed point} of $f$ and we write $\textrm{Fix}(f)$ for the set of all fixed points of $f$.

We recall that $f$ is \textit{topologically transitive} if there is an (strictly positive) orbit of $f$
\begin{equation*}
\mathcal{O}_{\varepsilon+}(f,x):=\left\{f^t(x)\,|\,\varepsilon\le t<\infty\right\},\quad \textrm{for some }\varepsilon>0,
\end{equation*}
that is dense in $X$; i.e., $\overline{\mathcal{O}_{\varepsilon+}(f,x)}=X$. According to a well-known theorem of Birkhoff, there follows that
\begin{itemize}
\item $(X,f)$ is topologically transitive if and only if for every pair of nonempty open subsets $U$ and $V$ of $X$, there follows $V\cap f^t(U)\not=\varnothing$ for some $t\le-1$. See, e.g., \cite[Theorem~5.9]{Wal} for the discrete-time $\mathbb{Z}$-action case.
\end{itemize}

\begin{proof}
Let $(X,f)$ be topologically transitive with $\overline{O_{\varepsilon+}(f,x)}=X$ and $U,V$ two non-void open subsets of $X$. Then one can find $0<t_1<t_1+2<t_2$ such that $f^{t_1}(x)\in V$ and $f^{t_2}(x)\in U$, and thus $f^{t_1-t_2}(U)\cap V\not=\varnothing$.

Conversely, assume that whenever $U,V$ are nonempty open sets then there exists $t\le-1$ with $V\cap f^t(U)\not=\varnothing$. Let $U_1,U_2,\dotsc$ be a countable base for $X$. Then $\left\{x\in X\,|\,\overline{\mathcal{O}_{1+}(f,x)}=X\right\}=\bigcap_{n=1}^\infty\bigcup_{-\infty<t\le-1}f^t(U_n)$ and $\bigcup_{-\infty<t\le-1}f^t(U_n)$ is clearly dense by the hypothesis. Hence the result follows.
\end{proof}

We now introduce a kind of weaker transitivity.

\begin{defn}
The topological semi-flow $f\colon \mathbb{R}_+\times X\rightarrow X$ is said to be \textit{topologically quasi-transitive} if there is an (positive) orbit of $f$,
$\mathcal{O}_{+}(f,x):=\left\{f^t(x)\,|\,0\le t<\infty\right\}$, which is dense in $X$, i.e., $\overline{\mathcal{O}_{+}(f,x)}=X$.
\end{defn}

It should be noted that since here every samples $f(t,\cdot)\colon X\rightarrow X$ are not necessarily surjective, the above two transitivity properties are different. For example, the closure of a positive orbit itself is topologically quasi-transitive, but not topologically transitive except it is Poissonwise recurrent. The essential difference is that if $f$ is topologically transitive there is a residual set of transitive points; but this is not the case for topological quasi-transitivity.

As usual, a point $p\in\textrm{Fix}(f)\cup\textrm{Per}(f)$ is called a \textit{critical element} of $f$ and its corresponding closed orbit $\mathcal{O}_+(f,p)$ a \textit{critical orbit} of $f$.

Recall from~\cite[V.7.05]{NS} that a motion $f(t,x)$ is called \textit{recurrent} in the sense of Birkhoff if for any $\varepsilon>0$ there exists a $T=T(\varepsilon)>0$ such that for any two moments $\tau,\upsilon\in\mathbb{R}_+$ there can be found a number $t$ with $\tau<t<\tau+T$ such that $d(f^\upsilon(x),f^t(x))<\varepsilon$. Clearly the fixed and periodic motions $f(t,p)$ both are Birkhoff recurrent. We note here that a Birkhoff recurrent point is sometimes called an almost periodic point or a uniformly recurrent point for the discrete-time dynamical systems in most of the literature; see, e.g., \cite{GH} and \cite{Wal}.

Since the underlying space $X$ of our topological dynamical system $(X,f)$ is complete, the connection between recurrent motions and minimal sets is established by the following theorem of Birkhoff:
\begin{itemize}
\item A motion $f(t,x)$ is Birkhoff recurrent if and only if $\overline{\mathcal{O}_+(f,x)}$ is a compact minimal invariant set of $f$; see, e.g., \cite[Theorems~V.7.06 and V.7.07]{NS}.
\end{itemize}

It is easy to show that a Birkhoff recurrent motion is Poisson stable and Lagrangian stable.\footnote{A motion $f(t,x)$ is called Poisson stable if $x$ itself is an $\omega$-limit point of $f(t,x)$, i.e. for some sequence $t_n\uparrow\infty$ we have $f(t_n,x)\to x$; and it is said to be Lagrangian stable if $\overline{\mathcal{O}_+(f,x)}$ is compact.} In addition, we say two motions $f(t,x)$ and $f(t,y)$ are \textit{far away} each other if $\overline{\mathcal{O}_+(f,x)}\bigcap\overline{\mathcal{O}_+(f,y)}=\varnothing$. So any two distinct critical orbits are far away each other.

\subsection{Definition of chaos of topological semi-flow}\label{sec1.2}%%%
To describe the unpredictability and disorder of the dynamical behavior of the continuous-time dynamical system $(X,f)$, differently from Li-Yorke~\cite{LY} and Devaney~\cite{Dev} but similar to Glasner and Weiss 1993~\cite{GW}, we now introduce a type of chaos stated as follows:

\begin{defn}\label{def2}%%%
The continuous-time topological dynamical system $(X,f)$ is called \textit{chaotic}, provided that the following three topological conditions are satisfied:
\begin{enumerate}
\item[(1)] (Topological quasi-transitivity) $f$ is topologically quasi-transitive on $X$.
\item[(2)] (Minimal-sets density) the Birkhoff recurrent points of $f$ are dense in $X$.
\item[(3)] (Non-minimality) there are at least two motions of $f$ far away each other; or equivalently, $(X,f)$ is not minimal itself.
\end{enumerate}
\end{defn}

This definition is mainly motivated by Robert L.~Devaney's chaos~\cite{Dev} that requires the following additional condition:
\begin{enumerate}
\item[(4)] $\overline{\textrm{Fix}(f)\cup\textrm{Per}(f)}=X$.
\end{enumerate}
We should note here that condition (1) together with (2) implies that
\begin{enumerate}
\item[$(1)^\prime$] $f$ is topologically transitive on $X$.
\end{enumerate}
However our definition of chaos is essentially weaker than Devaney's, because here our topological semi-flow $f$ possibly has neither any fixed points nor periodic points in $X$.
If there holds the unnecessarily restrictive condition (4) as in Devaney's chaos, then our condition (2) is satisfied trivially.

We note here that if $X$ has no isolated points, conditions $(1)^\prime$ and (4) imply condition (3) for any discrete-time dynamical system. However this is not the case for our continuous-time context. For example, if $f$ is the continuous-time rotation of the unit circle $\mathds{T}^1$ as mentioned before; conditions $(1)^\prime$, (2) and (4) all then hold except condition (3).

From the above Definition~\ref{def2}, it is easy to see that our chaos is preserved under topological equivalence of semi-flows. That is to say, if $g\colon\mathbb{R}_+\times Y\rightarrow Y$ is another topological dynamical system and $h\colon X\rightarrow Y$ is a homeomorphism such that $h$ transports orbits of $f$ onto orbits of $g$, i.e., $h(\mathcal{O}_+(f,x))=\mathcal{O}_+(g,h(x))$ for every $x\in X$, then $(Y,g)$ is chaotic if and only if so is $(X,f)$.

\subsection{Observable sensitivity}\label{sec1.3}%%%
For a continuous transformation $T\colon X\rightarrow X$, the topological transitivity together with the density of periodic points of $T$ implies the interesting property:
\begin{itemize}
\item \textit{Guckenheimer sensitivity}~\cite{Gu}: there exists a positive $\epsilon$ such that for all $x\in X$ and all neighborhoods $U$ of $x$, there is some $y\in U$ and for some integer $n>0$, $d(T^n(x),T^n(y))\ge\epsilon$. That is to say, $T$ is not Lyapunov stable at every point $x\in X$. See, e.g., \cite{BB, Sil, GW} for different proofs of it and \cite{KM} for an extension to $C$-semigroup actions.
\end{itemize}
However, for a mechanical system $(X,f)$, one often expects that an important phenomenon can be observed repeatedly; in other words, it is expected that to the continuous transformation $T\colon X\rightarrow X$, for ``sufficiently many'' initial values $y$ there are ``sufficiently many'' times $n_k$ such that $d(T^{n_k}(x),T^{n_k}(y))\ge\epsilon$ for all $k$.

For this, we now introduce a type of observable sensitivity, which is the most interesting part of our chaos described by Definition~\ref{def2}.

\begin{defn}\label{def3}%%%
The topological dynamical system $(X,f)$ is said to have \textit{sensitive dependence on initial data}, provided that one can find a \textit{sensitive constant} $\epsilon>0$ such that for any $x\in X$, there exists a dense $G_\delta$-set $\mathcal{S}_\epsilon^u(x)$ in $X$ so that
\begin{equation*}
\limsup_{t\to+\infty}d(f^t(x),f^t(y))\ge\epsilon
\end{equation*}
for each $y\in\mathcal{S}_\epsilon^u(x)$.
\end{defn}

Here $\mathcal{S}_\epsilon^u(x)$ is a large set from the topological point of view. So according to Definition~\ref{def3}, for any $\varDelta>0$ and any $y\in\mathcal{S}_\epsilon^u(x)$ we can find moments $t\ge\varDelta$ such that $d(f^t(x),f^t(y))>\epsilon$. This shows that our sensitivity is observable repeatedly. This is just the opposite side of the Lyapunov asymptotical stability.

Although our chaos definition looks weaker than Devaney's chaos, it still implies the central idea in chaos---sensitive dependence on initial data in the sense of Definition~\ref{def3}. This will be proved in Section~\ref{sec2}; see Theorem~\ref{thm4} below.

Here we further ask the following question: If $(X,f)$ is chaotic, does the set of $\epsilon$-observable times,
$$\mathscr{O}(y)=\left\{t>0\,|\,d(f^t(x),f^t(y))\ge\epsilon\right\}$$
for any $\epsilon,x$ and $y$ as in Definition~\ref{def3}, have the Lebesque measure $\infty$?

As we pointed out before, if $f(t,x)$ is the integrated flow of a bounded $\mathrm{C}^1$-vector field, this holds trivially. Generally, if $X$ is compact, then $\mathscr{O}(y)$ has the Lebesque measure $\infty$.

Similarly to Li-Yorke scrambled set, by $\mathcal{S}^u$ we mean the \textit{$u$-scrambled} Borel subset of $(X,f)$ such that
\begin{equation*}
\limsup_{t\to+\infty}d(f^t(x),f^t(y))>0\quad \forall x,y\in\mathcal{S}^u\textrm{ with }x\not=y.
\end{equation*}
We will prove that $\mathcal{S}^u$ is uncountable and dense in $X$ if $(X,f)$ is chaotic in the sense of Definition~\ref{def2}; see Theorem~\ref{thm5} in Section~\ref{sec3}.

\subsection{Completely non-chaotic system}\label{sec1.4}%%%
Let $\Lambda$ be a subset of $X$; then $(\Lambda,f)$ is called a subsystem of $(X,f)$ if $\Lambda$ is an $f$-invariant closed subset of $X$. The system $(X,f)$ is said to be \textit{completely non-chaotic} if it does not have any subsystems that are chaotic in their right in the sense of Definition~\ref{def2}. In Section~\ref{sec4}, we will show that if $(X,f)$ is not completely non-chaotic, then there is at least one maximal chaotic subsystem; see Theorem~\ref{thm7} below.

\subsection{Li-Yorke's chaos}\label{sec1.5}%%%
From the interesting work of Huang and Ye 2002~\cite{HY}, it follows that for any continuous transformation $T$ from $X$ into itself, Devaney's chaos implies Li-Yorke's chaos; that is, one can find an \textit{uncountable scrambled} set $S\subseteq X$ in the sense that
\begin{equation*}
\liminf_{n\to+\infty}d(T^n(x),T^n(y))=0\quad \textrm{and}\quad\limsup_{n\to+\infty}d(T^n(x),T^n(y))>0
\end{equation*}
for any $x,y\in S$ with $x\not=y$.

We now end this introductory section with the following

\begin{oq}
If the continuous-time topological dynamical system $(X,f)$ is chaotic in the sense of Definition~\ref{def2}, does it appear the Li-Yorke chaotic phenomenon in $X$?
\end{oq}

In Huang and Ye's proof, the periodic point and topological transitivity both play important roles. However, in our context, although $f$ is topologically transitive by $(1)^\prime$, here $f$ does not need to have any periodic points or fixed points. So there needs a new idea for this question.

%%%%%%%%%%%%%%%%%%%%%%%%%%%%%%%%%%%%
%%%%%%%%%%%%%%%%%%%%%%%%%%%%%%%%%%%%
\section{Sensitive dependence on initial data}\label{sec2}%%%
Let $f\colon\mathbb{R}_+\times X\rightarrow X$ be the continuous-time topological semi-flow on the Polish space $(X,d)$ as before. Recall from Definition~\ref{def3} that $(X,f)$ is said to have sensitive dependence on initial data if there is a constant $\epsilon>0$ such that to every $x\in X$ there is a dense $G_\delta$-set $\mathcal{S}_\epsilon^u(x)$ of $X$ such that
$\limsup_{t\to+\infty}d(f^t(x),f^t(y))\ge\epsilon$ for all $y\in\mathcal{S}_\epsilon^u(x)$.
Here by a $G_\delta$-set in $X$ we mean a set which can be expressed as the intersection of countable many open sets of $X$.

Using statistical property of a recurrent motion, importantly there follows the sensitive dependence on initial data from our definition of chaos as follows.

\begin{thm}\label{thm4}%%%
Let $(X,f)$ be chaotic in the sense of Definition~\ref{def2}. Then there follows the sensitive dependence on initial data in the sense of Definition~\ref{def3}.
\end{thm}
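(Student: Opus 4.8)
The plan is to fix the sensitivity constant from the non-minimality hypothesis and then show, for each base point $x$, that the set of non-sensitive $y$ is meager. First I would extract from conditions (2) and (3) of Definition~\ref{def2} two disjoint compact minimal sets. If $X$ carried only one minimal set $M$, then every Birkhoff recurrent point would lie in $M$ (its orbit closure being minimal and equal to $M$), so by the density of recurrent points $M$ would be dense, hence $M=X$ and $X$ minimal, contradicting (3). Since distinct minimal sets are disjoint and each is compact, I may choose $M_1,M_2$ with $8\epsilon:=d(M_1,M_2)>0$, and I claim $\epsilon$ (or any smaller positive number) is a sensitivity constant.

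Fix $x\in X$ and, for $n\in\mathbb{N}$, set $B_n=\set{y\in X:\ d(f^t(x),f^t(y))\le\epsilon\ \text{for all}\ t>n}$. Each $B_n$ is closed, so it suffices to show that no $B_n$ has nonempty interior, i.e. that no nonempty open $U$ is $\epsilon$-shadowed by the orbit of $x$ beyond time $n$. Suppose such a $U\subseteq B_n$ exists. By density of recurrent points (2) pick a Birkhoff recurrent $p\in U$; its statistical (syndetic return) property furnishes $T^\ast>0$ such that every time-interval of length $T^\ast$ contains an instant $s$ with $d(f^s(p),p)<\epsilon$. Put $L:=T^\ast$. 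For each $i$ choose $m_i\in M_i$ and, applying property (II)$'$ to the pair $(m_i,L)$, a ball $N_i=B(m_i,\delta_i)$ whose points stay within $\epsilon$ of the orbit of $m_i$ (which lies in $M_i$) throughout $[0,L]$.

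The heart of the argument is a synchronization made possible by the recurrence of $p$. Topological transitivity (forced by (1) and (2)) lets me find $y_1,y_2\in U$ and times $k_1,k_2>n$ with $f^{k_i}(y_i)\in N_i$, so that $d(f^t(y_i),M_i)<\epsilon$ for all $t\in[k_i,k_i+L]$; this is the step where transitivity is used, the relevant entrance times being unbounded. Shadowing gives $d(f^t(p),f^t(y_i))\le 2\epsilon$ for $t>n$, whence $d(f^t(p),M_i)<3\epsilon$ throughout $[k_i,k_i+L]$. As $L=T^\ast$, each window $[k_i,k_i+L]$ contains a return instant $s_i$ with $d(f^{s_i}(p),p)<\epsilon$, so $d(p,M_i)\le d(p,f^{s_i}(p))+d(f^{s_i}(p),M_i)<4\epsilon$ for both $i=1,2$. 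Then $8\epsilon=d(M_1,M_2)\le d(p,M_1)+d(p,M_2)<8\epsilon$, the desired contradiction; hence every $B_n$ is nowhere dense.

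Finally I would conclude by Baire category: $X$ being Polish, the dense open sets $V_n=X\setminus B_n=\set{y:\ d(f^t(x),f^t(y))>\epsilon\ \text{for some}\ t>n}$ have a dense $G_\delta$ intersection $\mathcal{S}_\epsilon^u(x):=\bigcap_n V_n$, and $y\in\mathcal{S}_\epsilon^u(x)$ means exactly that $d(f^t(x),f^t(y))>\epsilon$ at arbitrarily large $t$, i.e. $\limsup_{t\to+\infty}d(f^t(x),f^t(y))\ge\epsilon$, which is Definition~\ref{def3}. The one delicate point—and the step I expect to demand the most care—is this synchronization: a priori the excursions of $U$-orbits toward $M_1$ and toward $M_2$ happen at unrelated, non-overlapping times, and it is only the syndetic return times of the recurrent point $p$, falling inside the long staying-windows $[k_i,k_i+L]$, that force $p$ to be simultaneously near both $M_1$ and $M_2$. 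Accordingly one must fix the quantities in the order $p\to T^\ast\to L\to N_i\to k_i$ to avoid circularity, and keep the estimates strict so that the final inequality genuinely contradicts $d(M_1,M_2)=8\epsilon$.
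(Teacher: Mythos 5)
Your proof is correct, and at its core it runs on the same engine as the paper's: a Birkhoff recurrent point $p$ in the test neighborhood whose syndetic return times (the $T^\ast$ you extract from the definition of recurrence) are forced to land inside a window of length $T^\ast$ during which, by transitivity plus property (II)$'$, some orbit from the neighborhood shadows a far-away invariant compact set. The differences are in the packaging, and they are worth recording. First, you organize the argument by contradiction on the closed sets $B_n$ and finish with Baire category, whereas the paper directly proves density of the set $\C_\epsilon^u(x)=\{y:\limsup_t d(f^t(x),f^t(y))>\epsilon\}$ and only then wraps it in the $G_\delta$-set $\bigcap_k\bigcup_{n\ge k}W^u_\epsilon(x,n)$; these are dual formulations of the same Baire argument. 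Second, and more substantively, you manufacture the sensitivity constant from \emph{two disjoint compact minimal sets} $M_1,M_2$ (obtained from conditions (2) and (3) together with Birkhoff's theorem) and derive the contradiction by forcing $p$ to be within $4\epsilon$ of both simultaneously; the paper instead uses a single ``far-away'' orbit closure $\overline{\mathcal{O}_+(f,q_{\hat x})}$ at distance $\ge\delta_0$ from an auxiliary point $\hat x$, and transfers the separation from the pair $(\hat x,\cdot)$ to the pair $(x,\cdot)$ by two rounds of the triangle inequality, paying a factor that brings the constant down to $\delta_0/12$. Your route is cleaner on one genuine point: the positivity of the separation constant is immediate for two disjoint \emph{compact} sets, whereas the paper's $\delta_0$ is asserted from ``far away'' motions (merely disjoint orbit closures), which in a non-compact Polish space need not have positive mutual distance without invoking exactly the compact minimal sets you use. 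Two small items you correctly flag but should still nail down if you write this up: the fact that hitting times between nonempty open sets in a transitive system are unbounded (a short iteration of the open-set transitivity condition, which the paper also uses implicitly via ``$N\gg T$''), and the convention $\delta_i\le\epsilon$ so that the $t=0$ endpoint of the window $[0,L]$ is covered by your choice of $N_i$.
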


\begin{proof}
Since $X$ contains at least two motions of $f$ far away each other from condition (3) of Definition~\ref{def2}, one can find a number $\delta_0>0$ such that for all $\hat{x}\in X$ there exists a corresponding motion, say $f(t,q_{\hat{x}})$, not necessarily recurrent but dependent of $\hat{x}$, such that
\begin{equation*}
d\left(\hat{x},\overline{\mathcal{O}_+(f,q_{\hat{x}})}\right)\ge\delta_0,
\end{equation*}
where $d(\hat{x},A)=\inf_{a\in A}d(\hat{x},a)$ for any subset $A$ of $X$. We will show that $f$ has the sensitive dependence on initial data with sensitivity constant $\epsilon=\delta_0/12$ in the sense of Definition~\ref{def3}.

Write simply $\delta=\delta_0/4$ and define for any $x\in X$ the Borel set
\begin{equation*}
\C_\epsilon^u(x)=\left\{y\in X\,\big{|}\,\limsup_{t\to+\infty}d(f^t(x),f^t(y))>\epsilon\right\}.
\end{equation*}
Next we will prove that $\C_\epsilon^u(x)$ is dense in $X$ for each $x\in X$.

For this, we let $x,\hat{x}$ be two arbitrary points in $X$ and let $U$ be an arbitrary neighborhood of $\hat{x}$ in $X$.

Since the Birkhoff recurrent motions of $(X,f)$ are dense in $X$ from condition (2) of Definition~\ref{def2}, there exists a Birkhoff recurrent point $p\in U\cap B_{\delta/2}(\hat{x})$, where $B_r(\hat{x})$ is the open ball of radius $r$ centered at $\hat{x}$ in $X$. As we noted above, there must exist another point $q=q_{\hat{x}}\in X$ whose orbit $\mathcal{O}_+(f,q)$ is of distance at least $4\delta$ from the given point $\hat{x}$.

Let $\eta>0$ be such that $\eta<\delta/2$. Then from the Birkhoff recurrence of the motion $f(t,p)$, it follows that one can find a constant $T=T(\eta,p)>0$ such that for any $\gamma\ge0$, there is some moment $t_\gamma\in[\gamma,\gamma+T)$ verifying that
\begin{equation*}
d(p,f^{t_\gamma}(p))<\eta.
\end{equation*}
We simply write
\begin{equation*}
V=\bigcap_{t\in[0,2T)}f^{-t}(B_\delta(f^t(q))),\quad \textrm{where }f^{-t}(\cdot)={f(t,\cdot)}^{-1}\colon X\rightarrow X.
\end{equation*}
Clearly from condition (II)$^\prime$ of the topological semi-flow $f$, it follows that $V$ is a neighborhood of $q$ in $X$ but not necessarily open, and it is nonempty since $q\in V$.

Since $f$ is topologically quasi-transitive on $X$ by condition (1) of Definition~\ref{def2}, there exists at least one point $z\in U\cap B_\delta(\hat{x})$ such that $f^N(z)\in V$ for some sufficiently large number $N\gg T$. Let
$$
N=jT-r,\quad \textrm{where }0\le r<T\textrm{ and }j\in\mathbb{N},
$$
and choose by $\gamma=jT$
$$
t_{jT}\in[jT,(j+1)T)\quad \textrm{such that }d(p,f^{t_{jT}}(p))<\eta.
$$
Then $0\le t_{jT}-N<2T$.

By the above construction, one has
$$
f^{t_{jT}}(z)=f^{t_{jT}-N}(f^N(z))\in f^{t_{jT}-N}(V)\subseteq B_\delta(f^{t_{jT}-N}(q)).
$$
From the triangle inequality of metric and $d(p,f^{t_{jT}}(p))<\eta$, it follows that
\begin{equation*}\begin{split}
d(f^{t_{jT}}(p),f^{t_{jT}}(z))&\ge d(p,f^{t_{jT}}(z))-d(p,f^{t_{jT}}(p))\\
&\ge d(\hat{x},f^{t_{jT}}(z))-d(p,\hat{x})-\eta\\
&\ge d\left(\hat{x},f^{t_{jT}-N}(q)\right)-d\left(f^{t_{jT}-N}(q),f^{t_{jT}}(z)\right)-d(p,\hat{x})-\eta.
\end{split}\end{equation*}
Consequently, since $\eta\le\delta/2$, $p\in B_{\delta/2}(\hat{x})$ and $f^{t_{jT}}(z)\in B_\delta(f^{t_{jT}-N}(q))$, it holds that
$$
d\left(f^{t_{jT}}(p),f^{t_{jT}}(z)\right)\ge2\delta.
$$
Therefore from the triangle inequality of metric again, one can obtain either
$$
d(f^{t_{jT}}(\hat{x}),f^{t_{jT}}(z))\ge\delta
$$
or
$$
d(f^{t_{jT}}(\hat{x}),f^{t_{jT}}(p))\ge\delta.
$$
Repeating this argument for another likewise $N$ bigger than $(j+2)T$, one can find a sequence $t_n=j_nT\uparrow+\infty$ as $n\to+\infty$ such that
either $d(f^{t_n}(\hat{x}),f^{t_n}(z))\ge\delta$ or $d(f^{t_n}(\hat{x}),f^{t_n}(p))\ge\delta$,
for all $n\ge1$. Thus in either case, we have found a point $\hat{y}\in U$ such that
$$
\limsup_{t\to+\infty}d(f^t(\hat{x}),f^t(\hat{y}))\ge\delta=3\epsilon.
$$
Using the triangle inequality once more, we see either
\begin{equation*}
\limsup_{t\to+\infty}d(f^t(x),f^t(\hat{y}))>\epsilon
\end{equation*}
or
\begin{equation*}
\limsup_{t\to+\infty}d(f^t(x),f^t(\hat{x}))>\epsilon.
\end{equation*}
Since $\hat{x}, U$ both are arbitrary and $\hat{y}\in U$, $\C_\epsilon^u(x)$ is dense in $X$.

Finally for any integer $n\ge1$ and any $x\in X$, let
$$
W_\epsilon^u(x,n)=\left\{y\in X\,|\,d(f^t(x),f^t(y))>\epsilon\textrm{ for some }t\in[n,\infty)\right\}.
$$
Since $f(t,x)$ is continuous from condition (II) in Section~\ref{sec1.1}, $W_\epsilon^u(x,n)$ is open in $X$.
Now for any $x\in X$ we let
\begin{equation*}
\mathcal{S}_\epsilon^u(x)=\bigcap_{k=1}^\infty\left(\bigcup_{n=k}^\infty W_\epsilon^u(x,n)\right).
\end{equation*}
Then $\mathcal{S}_\epsilon^u(x)$ is a $G_\delta$-set in $X$. In addition, it is easy to see that ${\C}_\epsilon^u(x)\subseteq\mathcal{S}_\epsilon^u(x)$. Therefore, $\mathcal{S}_\epsilon^u(x)$ is a dense $G_\delta$-set in $X$.

This thus completes the proof of Theorem~\ref{thm4}.
\end{proof}

The above proof of Theorem~\ref{thm4} has been motivated by the surprising work of Banks et al.~\cite{BB} for Devaney's chaos in the discrete-time dynamical system case. However, without the density of periodic points and the topological transitivity of $f$, we here need some essential improvements of the proof of \cite{BB}.

If $(X,f)$ is topologically transitive, \textit{not sensitive on initial conditions in the sense of Guckenheimer}, and $X$ has no isolated point, then for every $\varepsilon>0$ there exist a transitive point $x_0\in X$ and a neighborhood $\U$ of $x_0$ such that for all $y\in \U$, we have $d(f^t(x_0),f^t(y))\le\varepsilon$ for every $t>0$ \cite[Lemma~1.1]{GW}. This \textit{equi-continuity} of $(f^t)_{t>0}$ at $x_0$ is the key point for Glasner and Weiss in \cite{GW} to prove the Guckenheimer sensitivity of Devaney's chaos using ergodic approach. However, in our situation, there is an obstruction for us to employ Glasner and Weiss' ergodic approach:
When $(X,f)$ is not sensitive in the sense of our Definition~\ref{def3}, we cannot obtain the equi-continuity of $(f^t)_{t>0}$ at any transitive points, because for every $y\in \U$ we can only get $d(f^t(x_0),f^t(y))\le\varepsilon$ for $t\ge t(y,\varepsilon)$. Here $t(y,\varepsilon)$ is not necessarily uniform with respect to $y\in\U$.

In \cite{GW}, Glasner and Weiss introduced a very general measure-theoretic condition:
\begin{enumerate}
\item[(5)] There exists an $f$-invariant probability measure on $X$, which is positive on every non-empty open set.
\end{enumerate}
Then following the same argument as in \cite[Theorem~1.3]{GW}, from the syndeticity of return times we can easily obtain the continuous-time version of Glasner and Weiss' theorem:

\begin{thmGW}[Glasner and Weiss]
Let $(X,f)$ be a continuous-time topological semi-flow. If there hold the conditions $(1)^\prime, (5)$ and $(3)$, then $(X,f)$ has the sensitivity on initial conditions in the sense of Guckenheimer; that is, there exists a constant $\epsilon>0$ such that for all $x\in X$ and all neighborhoods $U$ of $x$, there is some $y\in U$ and for some $t>0$, $d(f^t(x),f^t(y))\ge\epsilon$.
\end{thmGW}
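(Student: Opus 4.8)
The plan is to argue by contradiction, transposing the ergodic scheme of Glasner and Weiss \cite[Theorem~1.3]{GW} to the continuous-time setting. So I would suppose that $(X,f)$ is \emph{not} sensitive in the sense of Guckenheimer and deduce that $(X,f)$ is minimal, in contradiction with condition $(3)$. As a preliminary reduction I would first observe that under $(1)^\prime$, $(5)$ and $(3)$ the space $X$ has no isolated points: by the initial condition (I) and continuity (II) an isolated point would be a fixed point of $f$ carrying positive $\mu$-mass, and its singleton orbit closure would be a minimal set which, together with transitivity, would collapse $X$ to a single minimal orbit and violate $(3)$.

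Granted this, the continuous-time analogue of \cite[Lemma~1.1]{GW} quoted above becomes available: since $f$ is transitive and not Guckenheimer-sensitive, for each $\varepsilon>0$ there exist a transitive point $x_0$ and a neighborhood $U$ of $x_0$ with $d(f^t(x_0),f^t(y))\le\varepsilon$ for all $y\in U$ and all $t>0$. Thus $x_0$ is an equicontinuity point of the family $(f^t)_{t>0}$, and $(X,f)$ is \emph{almost equicontinuous}: the equicontinuity points form a dense $G_\delta$ that coincides with the set of transitive points. Note that this is exactly the step that is \emph{unavailable} under Definition~\ref{def3} (as explained before the statement), but is legitimate here because we are negating Guckenheimer sensitivity.

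The core of the argument is to upgrade local equicontinuity at the single point $x_0$ to \emph{uniform} equicontinuity of $(f^t)_{t>0}$ on all of $X$, and here the invariant measure of condition $(5)$ enters through the \emph{syndeticity of return times}. Concretely, I would use the full support and invariance of $\mu$, together with Poincar\'e recurrence, to produce in the support (which is all of $X$) a Birkhoff recurrent point, whose return times to any of its neighborhoods are by definition relatively dense in $\mathbb{R}_+$; equivalently, I would show the almost equicontinuous transitive system to be uniformly rigid, yielding a syndetic set $\mathscr{R}=\{t>0: f^t\ \text{is uniformly close to the identity}\}$. Combining such syndetic near-identity times with the equicontinuity estimate at $x_0$ and the density of $\mathcal{O}_+(f,x_0)$, I would propagate the stability modulus from $x_0$ to every point of $X$, obtaining that for each $\varepsilon>0$ a single $\delta>0$ works simultaneously at all points, i.e.\ $(f^t)_{t>0}$ is uniformly equicontinuous. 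A transitive uniformly equicontinuous semi-flow is minimal: every point then becomes uniformly recurrent, so $\overline{\mathcal{O}_+(f,x)}=X$ for all $x\in X$. This contradicts the non-minimality condition $(3)$, whence $(X,f)$ must be Guckenheimer-sensitive.

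The step I expect to be the main obstacle is precisely the passage from the finite invariant measure to the syndeticity of $\mathscr{R}$, and its use to globalize equicontinuity, in continuous time and on a merely Polish (possibly non-compact) space $X$. In the compact discrete-time case of \cite{GW} this is routine, but here I must (i) replace sums over return times by the continuous-time return structure and certify that the relevant return set is genuinely \emph{relatively dense} rather than only of positive density, which is where the Birkhoff-recurrence/uniform-rigidity mechanism, not mere Poincar\'e recurrence, is needed; and (ii) compensate for the absence of compactness by the finiteness and full support of $\mu$ (via Lagrangian stability of $\mu$-typical orbits), so that the equicontinuity modulus found at $x_0$ transfers uniformly across $X$. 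Once these two points are secured, the contradiction with $(3)$ follows exactly as in \cite{GW}.
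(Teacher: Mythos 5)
Your skeleton --- argue by contradiction, invoke the continuous-time analogue of \cite[Lemma~1.1]{GW} to produce a transitive equicontinuity point $x_0$, bring in the invariant measure of condition (5) through syndeticity of return times, and derive minimality to contradict (3) --- is the route the paper intends (its ``proof'' is essentially a citation of \cite[Theorem~1.3]{GW} plus the phrase ``syndeticity of return times''). The genuine gap lies in your central step. You propose to upgrade equicontinuity at the single point $x_0$ to \emph{uniform} equicontinuity (indeed uniform rigidity) of $(f^t)_{t>0}$ on all of $X$, and then to conclude minimality from ``transitive $+$ uniformly equicontinuous $\Rightarrow$ minimal''. Neither half is supplied. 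The globalization of the equicontinuity modulus from $x_0$ to every point is exactly what breaks down without compactness --- you name it as the main obstacle but give no mechanism for it --- and the implication ``transitive uniformly equicontinuous $\Rightarrow$ minimal'' is itself a compact-space fact: on a merely Polish space it already requires knowing that the dense orbit re-enters every nonempty open set at arbitrarily large times, which does not follow from $(1)'$ alone. As written, the proposal reduces the theorem to two unproved claims each at least as hard as the theorem itself.

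The repair is that the Glasner--Weiss argument never needs global equicontinuity; it only needs that the \emph{one} point $x_0$ is Birkhoff recurrent. Given $\varepsilon>0$, Lemma~1.1 (in its semi-flow form) yields a transitive $x_0$ and a neighborhood $U$ of $x_0$, which we may shrink so that $U\subseteq B_{\varepsilon}(x_0)\subseteq B_{2\varepsilon}(x_0)\subseteq U_0$ for a larger equicontinuity neighborhood $U_0$, with $d(f^t(x_0),f^t(y))\le\varepsilon$ for all $y\in U_0$ and $t>0$. By the Khintchine recurrence theorem for the measure-preserving $\mathbb{R}_+$-action (condition (5) gives $\mu(U)>0$), the set $S=\{t>0\,:\,\mu(U\cap f^{-t}(U))>0\}$ is syndetic; for $t\in S$ pick $y\in U$ with $f^t(y)\in U$, so that
\begin{equation*}
d(x_0,f^t(x_0))\le d(x_0,f^t(y))+d(f^t(y),f^t(x_0))\le 2\varepsilon,
\end{equation*}
hence $f^t(x_0)\in U_0$ and therefore $d(f^{\upsilon}(x_0),f^{\upsilon+t}(x_0))\le\varepsilon$ for \emph{all} $\upsilon\ge0$. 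Running over $\varepsilon$, this makes $f(t,x_0)$ Birkhoff recurrent in the sense of Section~\ref{sec1.1}, so by the quoted Birkhoff theorem $\overline{\mathcal{O}_+(f,x_0)}$ is a compact minimal set; since $x_0$ is transitive this set is $X$, contradicting (3). No uniform equicontinuity on $X$, and no compactness of $X$, is ever needed --- the compactness is an output (of Birkhoff's theorem), not an input.
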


Recall that a point $x\in X$ is called \textit{regular} for the semi-flow $f$ if it is a generic point of some $f$-invariant Borel probability measure $\mu$ with $\mu(U)>0$ for every open neighborhood $U$ of $x$ in $X$. Clearly, if condition (5) holds, then the regular points are dense in $X$. However since $X$ is not necessarily compact, the converse is not necessarily true.
We now ask naturally the following at the end of this section.

\begin{oq}
If the continuous-time semi-flow $(X,f)$ satisfies conditions (1) and (3) such that
the regular points are dense in $X$,
does it have the sensitive dependence on initial data in the sense of Definition~\ref{def3} or Guckenheimer?
\end{oq}
%%%%%%%%%%%%%%%%%%%%%%%%%%%%%%%%%%%
%%%%%%%%%%%%%%%%%%%%%%%%%%%%%%%%%%
\section{On the $u$-scrambled set}\label{sec3}
Throughout this section, let $(X,f)$ be a chaotic continuous-time topological semi-flow in the sense of Definition~\ref{def2}, where the state space $X$ is a Polish space as in Section~\ref{sec1.1}. Clearly, $X$ has no isolated points from Definition~\ref{def2}.

For a relation $R\subset X\times X$ and $x\in X$, write $R(x)=\{y\in X\,|\,(x,y)\in R\}$. Then we need a lemma.

\begin{lem}[{Huang and Ye~\cite[Lemma~3.1]{HY}}]\label{lem1}%%%
If $R$ is a symmetric relation with the property that there is a dense $G_\delta$ subset $A$ of $X$ such that for each $x\in A$, $R(x)$ contains a dense $G_\delta$ subset of $X$, then there is a dense uncountable subset $B$ of $X$ such that $(x,y)\in R$ whenever $x,y\in B$ with $x\not=y$.
\end{lem}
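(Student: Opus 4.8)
The plan is to deduce the statement from the \emph{Kuratowski--Mycielski} scheme: I would first upgrade the fibrewise hypothesis into a single symmetric comeager subset of $X\times X$ contained in $R$, and then extract from it a dense uncountable set (in fact a countable union of Cantor sets) all of whose distinct pairs lie in $R$, by a Cantor-scheme construction. Throughout I use that $X$ is a \emph{perfect} Polish space, which holds here since $X$ has no isolated points.

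First I would show that $R$ contains a symmetric dense $G_\delta$ subset of $X\times X$. Write $A=\bigcap_m A_m$ with each $A_m$ open and dense. By hypothesis the set $\{x\in X : R(x)\text{ is comeager}\}$ contains $A$, hence is comeager in $X$. Since the relations to which this lemma is applied are Borel --- e.g. $R=\{(x,y):\limsup_{t\to+\infty}d(f^t(x),f^t(y))>0\}$ is a countable union of $G_\delta$ sets, because $(x,y)\mapsto d(f^t(x),f^t(y))$ is jointly continuous --- the relation $R$ has the Baire property, so the Kuratowski--Ulam theorem (the category analogue of Fubini) promotes fibrewise comeagerness to joint comeagerness: $R$ is comeager in $X\times X$. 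I then fix open dense sets $G_n$ with $\bigcap_n G_n\subseteq R$; replacing $G_n$ by $\bigcap_{i\le n}(G_i\cap G_i^{-1})$ I may assume the $G_n$ are \emph{symmetric} and \emph{decreasing} without enlarging the intersection, and $\bigcap_n G_n\subseteq R$ still holds by symmetry of $R$.

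Next I would run the Cantor-scheme construction. Fix a countable base $\{W_j\}_{j\ge1}$ of $X$. By recursion on the length of $s\in 2^{<\omega}$ I build nonempty open boxes $U_s$ with centres so that: the closures of the two children $U_{s0},U_{s1}$ are disjoint and contained in $U_s$; $\operatorname{diam}U_s\to0$ along every branch; and, crucially, at level $n$ every pair $s\ne t$ of strings of length $n$ satisfies $\overline{U_s}\times\overline{U_t}\subseteq G_n$ --- possible because $G_n$ is open and dense, so each of the finitely many boxes can be shrunk one at a time to lie inside $G_n$. Each branch $\sigma\in 2^\omega$ then determines a single point $x_\sigma=\bigcap_n U_{\sigma\restriction n}$, and distinct branches give distinct points. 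If $\sigma\ne\tau$ first split at level $m$, then for every $n>m$ the strings $\sigma\restriction n\ne\tau\restriction n$, so $(x_\sigma,x_\tau)\in G_n$; since the $G_n$ decrease, $(x_\sigma,x_\tau)\in\bigcap_n G_n\subseteq R$. To also make the resulting set \emph{dense}, I would carry out this construction simultaneously inside each $W_j$ --- seeding one Cantor piece $K_j\subseteq W_j$ --- within one global tree whose level-$n$ separation still forces every cross-pair into $G_n$; then $B=\bigcup_j K_j$ is uncountable, meets every $W_j$, and has all distinct pairs in $R$.

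The hard part will be the very first step. The hypothesis provides comeager fibres only over the comeager set $A$, with \emph{no uniformity} in $x$, whereas what ultimately must be $R$-related are the Cantor-scheme limit points $x_\sigma$, not any pre-chosen centres. One cannot force the limit points into $R$ directly from fibrewise data, so the whole argument hinges on first converting that fibrewise information into a genuine comeager $G_\delta$ subset of $X\times X$; this is exactly where the Baire property of $R$ (automatic for the Borel relations arising from Definition~\ref{def3}) together with Kuratowski--Ulam are indispensable. Once this global $G_\delta$ is in hand, the Mycielski squeezing in the last paragraph is routine.
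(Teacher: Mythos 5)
The paper gives no proof of this lemma at all---it is quoted from Huang and Ye with a citation---so the comparison below is with their argument, which is the proof the paper is implicitly relying on. Your proposal is correct for the use the paper makes of the lemma, but it takes a genuinely different and strictly less general route. Huang and Ye's argument never leaves the base space $X$: writing $D_x\subseteq R(x)$ for a dense $G_\delta$ fibre ($x\in A$), one chooses points $x_\alpha\in A\cap\bigcap_{\beta<\alpha}D_{x_\beta}$ recursively for $\alpha<\omega_1$, interleaving a countable base of $X$ during the first $\omega$ steps to secure density; each such intersection is a countable intersection of dense $G_\delta$ sets, hence by Baire a nonempty (indeed comeager, so never exhausted by the countably many earlier choices) set, and the symmetry of $R$ upgrades the one-sided memberships $x_\alpha\in D_{x_\beta}\subseteq R(x_\beta)$ to the full pairwise conclusion. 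That argument uses nothing about $R$ beyond symmetry. Your route instead passes through the square $X\times X$, and the Kuratowski--Ulam step that converts fibrewise comeagerness into a dense $G_\delta$ subset of $X\times X$ is valid only when $R$ has the Baire property there---an assumption absent from the statement, which concerns an arbitrary symmetric relation. You flag this yourself and correctly note that the relation actually fed into the lemma in the proof of Theorem~\ref{thm5} is Borel, so your argument does cover the paper's application; and once the global dense $G_\delta$ is in hand, your Mycielski squeezing (symmetrized decreasing $G_n$, level-$n$ separation of all cross-boxes with shrinking one box at a time, one Cantor seed per basic open set) is carried out correctly and even yields more than the lemma asserts, namely that $B$ may be taken to be a countable union of Cantor sets and hence of cardinality continuum rather than merely uncountable. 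In short: you trade the full generality of the stated lemma for a perfect-set-sized scrambled set; to prove the lemma as literally stated, replace your first step by the elementary recursion above.
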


As a simple consequence of the statements of Theorem~\ref{thm4} and Lemma~\ref{lem1} above, we can easily obtain the following.

\begin{thm}\label{thm5}%%%
For the chaotic dynamical system $(X,f)$, its $u$-scrambled set $\mathcal{S}^u$ is uncountable and dense in $X$.
\end{thm}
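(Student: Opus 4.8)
The plan is to apply Lemma~\ref{lem1} directly, with the relation capturing the $u$-scrambled condition. First I would define
\begin{equation*}
R=\set{(x,y)\in X\times X\,\big{|}\,\limsup_{t\to+\infty}d(f^t(x),f^t(y))>0}.
\end{equation*}
This relation is manifestly symmetric, since $d(f^t(x),f^t(y))=d(f^t(y),f^t(x))$, so the symmetry hypothesis of the lemma is free. The content of the theorem is exactly that the set $B$ produced by Lemma~\ref{lem1} is a $u$-scrambled set (any two distinct points of $B$ have positive $\limsup$ separation), and the lemma guarantees $B$ is dense and uncountable, which is the desired conclusion.

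To invoke Lemma~\ref{lem1} I must verify its hypothesis: a dense $G_\delta$ set $A$ such that for each $x\in A$, the fibre $R(x)$ contains a dense $G_\delta$ set. Here Theorem~\ref{thm4} does the heavy lifting. By Theorem~\ref{thm4}, $(X,f)$ has sensitive dependence on initial data with some constant $\epsilon>0$, so for \emph{every} $x\in X$ there is a dense $G_\delta$-set $\mathcal{S}_\epsilon^u(x)$ with $\limsup_{t\to+\infty}d(f^t(x),f^t(y))\ge\epsilon$ for all $y\in\mathcal{S}_\epsilon^u(x)$. In particular $\mathcal{S}_\epsilon^u(x)\subseteq R(x)$, because $\limsup\ge\epsilon>0$ forces $\limsup>0$. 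Thus $R(x)$ contains a dense $G_\delta$ set for \emph{every} $x\in X$, so I may simply take $A=X$, which is trivially a dense $G_\delta$ subset of itself.

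With $A=X$ and the fibre condition verified, Lemma~\ref{lem1} yields a dense uncountable $B\subseteq X$ with $(x,y)\in R$ for all distinct $x,y\in B$; unwinding the definition of $R$ shows $B$ is a $u$-scrambled set, so $\mathcal{S}^u$ may be taken to be $B$ (any $u$-scrambled set containing $B$ inherits density and uncountability). The only mild subtlety, rather than a genuine obstacle, is matching conventions: the paper's definition of the $u$-scrambled set uses $\limsup>0$, while Theorem~\ref{thm4} delivers the sharper bound $\ge\epsilon$, so I would remark explicitly that the relation $R$ captures $\limsup>0$ and that Theorem~\ref{thm4} supplies dense $G_\delta$ fibres for it with room to spare. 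Everything else is a routine application of the cited lemma, so I do not expect any serious difficulty.
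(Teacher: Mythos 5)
Your proposal is correct and follows essentially the same route as the paper: define the symmetric relation $R$ by $\limsup_{t\to+\infty}d(f^t(x),f^t(y))>0$, observe that Theorem~\ref{thm4} gives $\mathcal{S}_\epsilon^u(x)\subseteq R(x)$ as a dense $G_\delta$ fibre for every $x$ (so $A=X$ works), and apply Lemma~\ref{lem1}. No differences worth noting.
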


\begin{proof}
Let the symmetric relation $R$ be defined by $(x,y)\in R$ if and only if $\limsup_{t\to+\infty}d(f^t(x),f^t(y))>0$. Then from Theorem~\ref{thm4} proved in Section~\ref{sec2}, it follows that for any $x\in X$, $R(x)$ contains a dense $G_\delta$ subset of $X$ since $\mathcal{S}_\epsilon^u(x)\subseteq R(x)$ for some $\epsilon>0$. So, the statement comes immediately from Lemma~\ref{lem1}.

This completes the proof of Theorem~\ref{thm5}.
\end{proof}
%%%%%%%%%%%%%%%%%%%%%%%%%%%%%%%%%%%%
%%%%%%%%%%%%%%%%%%%%%%%%%%%%%%%%%%%%
\section{Maximal chaotic subsystems}\label{sec4}%%%
In applications, it often appears that the continuous-time topological dynamical system $(X,f)$ itself is not chaotic in the sense of Devaney and even of Definition~\ref{def2}, but actually there are chaotic subsystems such as Axiom A flows. For that, we now introduce the following concept.

\begin{defn}\label{def6}
Let $\Lambda$ be an $f$-invariant closed subset of $X$. If $(\Lambda,f)$ itself is chaotic in the sense of Definition~\ref{def2} and moreover there is no other likewise chaotic subset that properly contains $\Lambda$, then $\Lambda$ is called a \textit{maximal chaotic subset}/{\it subsystem} of $(X,f)$.
\end{defn}

This section will be devoted to proving the following existence theorem of maximal chaotic subsystem.

\begin{thm}\label{thm7}%%%
If $(X,f)$ is a continuous-time topological semi-flow which is not completely non-chaotic, then there always exists a maximal chaotic subsystem of $(X,f)$.
\end{thm}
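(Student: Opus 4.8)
The plan is to invoke Zorn's lemma on the collection $\mathcal{P}$ of all chaotic subsystems of $(X,f)$, partially ordered by inclusion. By hypothesis $(X,f)$ is not completely non-chaotic, so $\mathcal{P}\ne\varnothing$. A maximal element of $\mathcal{P}$ is precisely a maximal chaotic subsystem in the sense of Definition~\ref{def6}, so it suffices to produce an upper bound for an arbitrary chain $\{\Lambda_\alpha\}_{\alpha\in I}$ in $\mathcal{P}$. My candidate is $\Lambda:=\overline{\bigcup_{\alpha\in I}\Lambda_\alpha}$. First I would check that $\Lambda$ is a genuine subsystem: it is closed by construction, hence a Polish subspace of $X$; and since each $\Lambda_\alpha$ is forward invariant, so is $\bigcup_\alpha\Lambda_\alpha$, and then so is its closure by continuity of each $f^t$. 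Thus $(\Lambda,f)$ is a topological semi-flow on a Polish space, and it remains only to verify the three conditions of Definition~\ref{def2}.

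Conditions (2) and (3) are inherited essentially for free. For (2), Birkhoff recurrence of a point is intrinsic to its orbit (equivalently, its orbit closure is compact minimal), so a Birkhoff recurrent point of any $\Lambda_\alpha$ remains Birkhoff recurrent in $\Lambda$; since the recurrent points are dense in each $\Lambda_\alpha$ and $\bigcup_\alpha\Lambda_\alpha$ is dense in $\Lambda$, the recurrent points are dense in $\Lambda$. For (3), fix any single member $\Lambda_{\alpha_0}$, which by its non-minimality carries two motions $f(t,a)$ and $f(t,b)$ far away from each other; their orbit closures are the same subsets of $X$ regardless of the ambient space, so $a$ and $b$ witness non-minimality of $\Lambda$ as well.

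The only delicate point is condition (1), topological quasi-transitivity of $\Lambda$. I would obtain it by first establishing topological transitivity (condition (1)$'$) of $\Lambda$ and then upgrading to (1). Given two nonempty relatively open sets $U,V\subseteq\Lambda$, density of $\bigcup_\alpha\Lambda_\alpha$ forces each to meet some member, and because $\{\Lambda_\alpha\}$ is a \emph{chain} the two members are nested, so $U$ and $V$ both meet one common $\Lambda_\beta$. There $U\cap\Lambda_\beta$ and $V\cap\Lambda_\beta$ are nonempty and relatively open, and $(\Lambda_\beta,f)$ is topologically transitive, being chaotic and hence satisfying (1)$'$ through the implication (1)$+$(2)$\Rightarrow$(1)$'$; the Birkhoff crossing criterion then yields a time $t$ with $f^t(U\cap\Lambda_\beta)\cap(V\cap\Lambda_\beta)\ne\varnothing$, a fortiori $f^t(U)\cap V\ne\varnothing$. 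By the Birkhoff characterization this makes $(\Lambda,f)$ topologically transitive. Finally, transitivity gives a point $x$ and $\varepsilon>0$ with $\mathcal{O}_{\varepsilon+}(f,x)$ dense in $\Lambda$; since $\mathcal{O}_{\varepsilon+}(f,x)\subseteq\mathcal{O}_{+}(f,x)$, the full positive orbit of $x$ is dense too, so $\Lambda$ is quasi-transitive and (1) holds. This transfer of transitivity through a common member of the chain, together with the elementary passage from (1)$'$ to (1), is the heart of the argument and the step I expect to need the most care.

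With all three conditions verified, $\Lambda\in\mathcal{P}$ and $\Lambda\supseteq\Lambda_\alpha$ for every $\alpha$, so every chain in $\mathcal{P}$ has an upper bound. Zorn's lemma then furnishes a maximal element of $\mathcal{P}$, which is the desired maximal chaotic subsystem of $(X,f)$.
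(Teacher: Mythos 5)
Your proposal is correct and follows essentially the same route as the paper: Zorn's lemma applied to the family of chaotic subsystems ordered by inclusion, with the closure of the union of a chain as the upper bound, and transitivity transferred through a common chain member via nestedness. You are in fact somewhat more careful than the paper's own proof, which only verifies topological transitivity of the upper bound and leaves conditions (2), (3) and the passage from $(1)^\prime$ back to (1) implicit.
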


\begin{proof}
Let $\mathscr{D}$ be the family of all chaotic subsets of $(X,f)$. It is nonempty, since $(X,f)$ is not completely non-chaotic. We now equip $\mathscr{D}$ with a partial order as follows: $A\preceq B$ if and only if $A\subseteq B$, for any $A,B\in\mathscr{D}$. Let $\mathscr{C}$ be any given totally ordered chain of $(\mathscr{D},\preceq)$ and set
\begin{equation*}
F=\overline{{\bigcup}_{C\in\mathscr{C}}C}.
\end{equation*}
It is easy to see that $F$ as a subspace of $X$ is a complete separable metric space and that $F$ is $f$-invariant. We then claim that $F\in\mathscr{D}$.

In fact, we need only check that the semi-flow $f$ is topologically transitive restricted to $F$. For that, let $U$ and $V$ be two non-empty open subsets of $F$. Then one can find two open sets $U^\prime$ and $V^\prime$ of $X$ such that
$$
U=U^\prime\cap F\quad \textrm{and}\quad V=V^\prime\cap F.
$$
So there are at least two elements $C_1, C_2\in\mathscr{C}$ such that
$$
U^\prime\cap C_1\not=\varnothing\quad\textrm{and}\quad V^\prime\cap C_2\not=\varnothing.
$$
Since $\mathscr{C}$ is totally ordered under $\preceq$, we have either $C_1\subseteq C_2$ or $C_2\subseteq C_1$. Without loss of generality, we assume $C_1\subseteq C_2$. Then from $(1)^\prime$, it follows that
$$
U^{\prime\prime}:=U^\prime\cap C_2\not=\varnothing\quad\textrm{and}\quad V^{\prime\prime}:=V^\prime\cap C_2\not=\varnothing.
$$
As $(C_2,f)$ itself is chaotic from the definition of $\mathscr{D}$, it follows that
$$
V^{\prime\prime}\cap f^t(U^{\prime\prime})\not=\varnothing\quad \textrm{for some }t\ge1.
$$
Therefore $V\cap f^t(U)\not=\varnothing$ and so $f$ is topologically transitive restricted to $F$.

This thus completes the proof of Theorem~\ref{thm7}.
\end{proof}
%%%%%%%%%%%%%%%%%%%%%%%%%%%%%%%%

\subsection*{\textbf{Acknowledgments}}%
This work was partly supported by National Natural Science Foundation of China grants $\#$11431012, 11271183 and PAPD of Jiangsu Higher Education Institutions.
%%%%%%%%%%%%%%%%%%%%%%%%%%%%%%%%

% ------------------------------------------------------------------------
\end{document}